\newlist{steps}{enumerate}{1}
\setlist[steps, 1]{label = Step \arabic*:}
\newtheorem{themain}{Theorem}
\newtheorem{Theorem}{Theorem}[section]
\newtheorem{Lemma}[Theorem]{Lemma}
\theoremstyle{definition}
\theoremstyle{remark}
\newtheorem{Remark}[Theorem]{Remark} 
\numberwithin{equation}{section}
\newcommand{\R}{\mathbb R}
\newcommand{\id}{\operatorname{id}}
\newcommand{\SL}{{\rm SL} (2n+1, \mathbb R)}
\renewcommand{\sl}{\mathfrak{sl} (2n+1, \mathbb R)}
\newcommand{\ad}{\operatorname{Ad}}
\newcommand{\tr}{\operatorname{Tr}}
\newcommand{\sym}[2]{\operatorname{Sym}^{#2}({#1}, \mathbb R)}
\newcommand{\syms}[2]{\operatorname{Sym}_*^{#2}({#1}, \mathbb R)}
\newcommand{\skw}[1]{\operatorname{Skew}({#1}, \mathbb R)}
\newcommand{\SO}[2]{\operatorname{SO}({#1} {#2})}
\newcommand{\OO}[2]{\operatorname{O}({#1} {#2})}
\newcommand{\so}[2]{\operatorname{\mathfrak{so}}({#1\,} {#2})}
\begin{document}
\title{Geodesics of multivariate normal distributions and 
a Toda lattice type Lax pair}
\dedicatory{}
 \author[S.-P.~Kobayashi]{Shimpei Kobayashi}
 \address{Department of Mathematics, Hokkaido University, 
 Sapporo, 060-0810, Japan}
 \email{shimpei@math.sci.hokudai.ac.jp}
 \thanks{The author is partially supported by Kakenhi 22K03304.}
 \subjclass[2020]{Primary~53C35, Seondary~62B10, 17B80}
 \keywords{Multivariate normal distributions; geodesics; symmetric spaces; Riemannian submersions; Toda lattice}
 \date{\today}
\pagestyle{plain}
\begin{abstract}
 We study geodesics of multivariate normal distributions 
 with respect to the Fisher metric.
 First it will be shown that 
 a computational formula for geodesics 
 can be understood using the block 
 Cholesky decomposition and a natural Riemannian submersion. 
 Next a mid point algorithm for geodesics will be obtained. 
 And finally a new Toda lattice type Lax pair will be derived 
 from the geodesic and the block Cholesky decomposition.
  \end{abstract}
\maketitle    

\section{Introduction and the main results}
Due to the invariance property under sufficient statistics \cite{Che}, it is important to 
consider a parametric statistical model, that is a family of 
probability distributions with 
a finite number of parameters, 
as a Riemannian manifold with specific metric, the so-called 
\textit{Fisher metric}:
\[
 g(X,Y)=E_{\theta }[(X\log p(x, \theta)) (Y \log p(x, \theta)) ],\quad 
E_{\theta}[f]
 =\int_{\R^n} f(x) p(x, \theta) \, dx,
 \]
 where $p(x, \theta)$ denotes a family of probability density 
 functions of $x \in \R^n$, and 
 $X, Y$  are tangent vectors at a point $\theta$
 of the manifold.
Fundamental parametric models are
given by the model of normal distributions and its multivariate generalization, the \textit{multivariate normal distributions model}, that is,
the probability density function is given by  
\[
 p\left(x, (\Sigma, \mu) \right)= \frac1{\sqrt {(2\pi )^{n}\det \Sigma}}
 \, {\exp \left(-\frac12(x-\mu)^{T}{\Sigma}^{-1}(x-\mu )\right)},
\]
 where $x\in \R^n$, and  $\mu \in \R^n$ is called the \textit{mean vector}
 and $\Sigma$ is called the \textit{covariance matrix}, which is 
 in order $n$  positive definite matrices, $\sym{n}{+}$.  In this paper, the $\frac12 n(n+3)$-dimensional manifold
 $\mathcal N = \left\{(\Sigma, \mu)\mid \Sigma \in \sym{n}{+}, \,\mu \in \R^n\right\}$ with the Fisher metric $g$ 
 will be called the \textit{multivariate 
 normal manifold} $(\mathcal N, g)$. See \cite{Sko} for Riemannian geometry of $(\mathcal N, g)$, and note that it has been shown \cite{KO, FIK, GQ} that $(\mathcal N, g)$ has a solvable Lie group structure. 

 It is a fundamental problem to compute geodesics of a Riemannian manifold.
 For the multivariate normal manifold $(\mathcal N, g)$, it was first given by Eriksen \cite{Eri} as follows:
 First we realize the multivariate normal manifold $\mathcal N$ as
 \begin{align}\label{eq:N}
 \mathcal N &= \left\{
 \begin{pmatrix}
\Theta &  \delta \\
 \delta^T & 1 + \|\delta \|^2_{\Theta^{-1}}
\end{pmatrix}\,\mid\,
\begin{array}{c}
\Theta = \Sigma^{-1} \in \sym{n}{+},  \\
\delta = \Sigma^{-1} \mu \in \R^n,\, \|\delta \|^2_{\Theta^{-1}}= \delta^T \Theta^{-1} \delta
\end{array}
 \right\} \\
 \nonumber
 &\subset\sym{n+1}{+},
 \end{align}
 and the geodesic equation (integrated once) can be formulated as
\[
\frac{d}{dt} \begin{pmatrix}
\Theta & \delta
\end{pmatrix} = \begin{pmatrix}
-A_0 &a_0    
\end{pmatrix} \begin{pmatrix} \Theta & \delta \\ \delta^T & 
1 + \| \delta\|^2_{\Theta^{-1}}\end{pmatrix},
\]
 where $A_0 \in \sym{n}{}$ and $a_0 \in \R^n$ are some constant.
 Second, to construct a solution of the geodesic equation, let 
 \begin{align}\label{eq:GtiniV}
 G(t) &= \exp ( t V) \quad \mbox{with} \quad
 V = \begin{pmatrix}
 - A_0 & a_0 & 0 \\
 a_0^T & 0 & - a_0^T \\
 0 & -a_0 & A_0 \\
 \end{pmatrix},
 \quad \mbox{and} \quad
  J=
\begin{pmatrix}
&&\id_{n}\\
& \id_{1}& \\
\id_{n}&&
\end{pmatrix},
\end{align}
 that is, $V \in \sym{2n+1}{}$ with $\operatorname{tr} V =0$.
 Since $G$ is positive definite with determinant $1$ and satisfies a symmetry relation $J G^{-1} J = G$, the block Cholesky decomposition  of $G= M D M^T$, see Lemma \ref{lem:geo}, is given by 
 \begin{equation}\label{eq:MD}
 M = \begin{pmatrix}
  \id_n & 0  &0 \\ \delta^T \Theta^{-1}&  1 & 0 \\
   * &  -  \Theta^{-1}\delta & \id_n \\
 \end{pmatrix}, \quad 
  D=
 \begin{pmatrix}
 \Theta & 0  & 0 \\
 0 & 1 & 0  \\
 0 & 0 & \Theta^{-1} 
 \end{pmatrix},
 \end{equation}
 where $\id_n$ is the identity matrix of order $n$, 
 $\delta : \R \to \R^n$ and $\Theta : \R \to \sym{n}{+}$ and $*$ denote 
 some $n \times n$ matrix.
 Finally from the form of $G(t)$ given by \eqref{eq:MD}, the sub-matrix of $G(t)$ by the first $(n+1)$-rows and the first 
 $(n+1)$-columns defines a curve in $(\mathcal N, g)$ and it is in fact a geodesic in $\mathcal N$.
  Note that in \cite{Eri} the formula was derived by a direct computation without using the block 
  Cholesky decomposition, and a more explicit 
 formula of the geodesic was derived in \cite{CO}.
 In Section \ref{subsc:explictgeod}, we will review the construction
 using the block Cholesky decomposition in details.
  
 We now explain the above result more geometrically: 
 The idea is that the curve $G(t)$ in \eqref{eq:GtiniV} seems to define a geodesic in 
 some homogeneous space, and the projection to the sub-matrix 
 could be understood geometrically. First the curve $G(t)$ is in $\mathcal L = \syms{2n+1}{+}$, where $\syms{2n+1}{+}$ denotes the set of positive definite matrices with determinant $1$ and 
 $\mathcal L$ is a Riemannian symmetric space of 
 type \textrm{AI}:
 \[
 \mathcal L =  \SL/\SO{2n+1}{},
 \]
  and thus $G(t)$ is a homogeneous geodesic in $\mathcal L$. Moreover, there is a non-compact Riemannian symmetric space of type \textrm{BDI}:
 \begin{equation}\label{eq:RiemSym}
\mathcal M= \SO{n+1, }{n}/\operatorname{S}(\OO{n+1}{} \times \OO{n}{})
 \subset \mathcal L= \syms{2n+1}{+},
\end{equation}
 which is a totally geodesic submanifold of $\mathcal L$. 
 Note that $\mathcal M$ is in fact a reflective submanifold (a special totally geodesic submanifold) of $\mathcal L$,  see \cite{Le}.
 It is easy to see that a point $m \in \mathcal L = \syms{2n+1}{+}$ belongs to $\mathcal M$ 
 if and only if $m$ satisfies the symmetry $J m^{-1} J = m$, and the geodesic $G(t)$ in $\mathcal L = \syms{2n+1}{+}$ satisfies the symmetry and thus it is a geodesic of $\mathcal M$.
 Since $\mathcal N$ can be realized in $\sym{n+1}{+}$ as in \eqref{eq:N},  
 the projection from $\mathcal M$ to the sub-matrix
 by the first $(n+1)$-rows and the first  $(n+1)$-columns can be understood as the Riemannian submersion $\pi:\mathcal M \to \mathcal N$.
 Finally, the geodesic $G(t)$ in $\mathcal M$ is in fact horizontal, 
 that is, a tangent vector of $G(t)$ at any point $p\in \mathcal M$ is
  orthogonal to the fiber $\pi^{-1}(\pi(p))$, and a standard argument about Riemannian submersions shows the following geometric characterization:
\begin{themain}\label{thm:main1} 
 Any geodesic in the multivariate normal manifold $(\mathcal N, g)$
 can be obtained by the Riemannian submersion of a horizontal geodesic of the 
 Riemannian symmetric space  $\mathcal M$ in \eqref{eq:RiemSym}.
\end{themain}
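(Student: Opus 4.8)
The plan is to obtain the statement as a direct application of the standard lifting theory for Riemannian submersions, taking as already established (from the discussion preceding the theorem) three facts: that $\pi:\mathcal M\to\mathcal N$ is a Riemannian submersion, that $\mathcal M$ is complete since it is a Riemannian symmetric space, and that the curves $G(t)$ produced by the block Cholesky construction are horizontal geodesics of $\mathcal M$. The two classical properties I would invoke are O'Neill's: (i) a geodesic of $\mathcal M$ whose velocity is horizontal at one instant stays horizontal for all time, and its projection is then a geodesic of $\mathcal N$; and (ii) since $d\pi_p$ restricts to a linear isometry from the horizontal subspace at $p$ onto $T_{\pi(p)}\mathcal N$, every tangent vector of $\mathcal N$ admits a unique horizontal lift at each point of the corresponding fibre.

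Given an arbitrary geodesic $c:I\to\mathcal N$, I would first pick any point $p\in\pi^{-1}(c(0))$ and let $w\in T_p\mathcal M$ be the horizontal lift of $\dot c(0)$, so that $d\pi_p(w)=\dot c(0)$. By completeness of $\mathcal M$ there is a geodesic $\gamma$ of $\mathcal M$ with $\gamma(0)=p$ and $\dot\gamma(0)=w$, defined on all of $\R$. Property (i) guarantees that $\gamma$ is horizontal for all $t$ and that $\pi\circ\gamma$ is a geodesic of $\mathcal N$. Since $(\pi\circ\gamma)(0)=c(0)$ and $\tfrac{d}{dt}\big|_{t=0}(\pi\circ\gamma)=d\pi_p(w)=\dot c(0)$, the uniqueness of geodesics with prescribed initial position and velocity forces $\pi\circ\gamma=c$ on the common domain. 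Hence $c$ is the image under $\pi$ of the horizontal geodesic $\gamma$, which is precisely the assertion.

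To connect this with the explicit description and to confirm that the procedure genuinely exhausts all geodesics, I would use the homogeneity of $\mathcal M$ to reduce to the base point $\id_{2n+1}$, whose horizontal tangent space is spanned exactly by the matrices $V$ of the block shape in \eqref{eq:GtiniV}; a dimension count ($\tfrac12 n(n+1)$ for $A_0\in\sym{n}{}$ plus $n$ for $a_0\in\R^n$, giving $\tfrac12 n(n+3)=\dim\mathcal N$) shows these $V$ account for every horizontal direction. The abstract geodesic $\gamma$ is then realized concretely as $G(t)=\exp(tV)$, and the block Cholesky factorization \eqref{eq:MD} identifies its projection with the Eriksen geodesic through $(\Theta,\delta)=(\id_n,0)$. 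The genuinely substantive inputs are the ones settled earlier, namely that the top-left $(n+1)\times(n+1)$ block map is metric preserving on horizontal vectors, so that $\pi$ is a Riemannian submersion, and that the $G(t)$ are horizontal; granting these, the argument above is formal. Accordingly the only point inside this proof requiring care is the correct invocation of property (i): one must know that a geodesic of $\mathcal M$ which starts horizontal cannot acquire a vertical component, so that horizontality persists globally and $\pi\circ\gamma$ is a bona fide geodesic rather than merely a curve with horizontal initial velocity.
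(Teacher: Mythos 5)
Your proposal is correct and follows essentially the same route as the paper: both rest on O'Neill's result that a geodesic of $\mathcal M$ horizontal at one point stays horizontal and projects to a geodesic of $\mathcal N$, applied to the submersion $\pi:\mathcal M\to\mathcal N$ and the curves $G(t)=\exp(tV)$ with $V\in\mathfrak h$. If anything, you are more careful than the paper's one-line proof, since you spell out the surjectivity step (horizontal lift of the initial velocity, completeness of $\mathcal M$, and uniqueness of geodesics) and the dimension count $\tfrac12 n(n+1)+n=\tfrac12 n(n+3)=\dim\mathcal N$ showing the block-shaped $V$ exhaust the horizontal directions, both of which the paper leaves implicit.
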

The proof of Theorem \ref{thm:main1} will be given in Section \ref{subsc:Rimsub} below.
\begin{Remark}
 A part of the result in Theorem \ref{thm:main1} had been
 obtained by a direct computation in \cite[Chapter 4]{Ino:theis}. In this paper we 
 give a geometric proof
 using the fundamental result of Riemannian submersions \cite{On, Har}.
\end{Remark}
 For the above characterization, 
 it is important that two Riemannian symmetric spaces $\mathcal L=\syms{2n+1}{+}= \SL/\SO{2n+1}{}$ and 
 $\mathcal M= \SO{n+1, }{n}/\operatorname{S}(\OO{n+1}{} \times \OO{n}{})$ are 
 given in the space of symmetric positive definite matrices with determinant $1$ 
$\syms{2n+1}{+}$.
 Moreover since $\mathcal M$ is embedded totally geodesically in $\mathcal L$, the 
 horizontal geodesic $G(t)$ can be thought as a geodesic in $\syms{2n+1}{+}$. 
 On the one hand, Nakamura \cite{Nak:AHM} had given an algorithm 
 to obtain the mid point of a geodesic segment connecting two points $p$ and $q$ in 
 the space of positive definite matrices, the so-called \textit{AHM algorithm} (Arithmetic and harmonic mean algorithm).
 Thus applying the AHM algorithm to the horizontal geodesic of $\mathcal M \subset \mathcal L$ 
 and by using the projection $\pi : \mathcal M \to \mathcal N$, we obtain
 the following result:
  \begin{themain}\label{thm:main2}
   Let $p, q \in \mathcal N$ and $\gamma$ be distinct points and 
  the geodesic segment connecting $p$ and $q$, respectively. 
  Moreover, let $r$ be the midpoint on $\gamma$. 
  Then there exist two sequences $\{p_n \in \mathcal N\}_{n=0,1,2,\dots}$ and $\{q_n \in \mathcal N\}_{n=0,1,2,\dots}$ 
  with $p_0=p$ and $q_0=q$
  such that $\{p_n\}_{n=0,1,2,\dots}$ and $\{q_n\}_{n=0,1,2,\dots}$ 
 both converge to the mid point $r$ of $\gamma$.
  \end{themain}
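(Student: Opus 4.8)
The plan is to lift the whole problem to the ambient symmetric space, run Nakamura's AHM algorithm there, and push the result back down by the submersion. First I would invoke Theorem \ref{thm:main1}: the geodesic segment $\gamma$ from $p$ to $q$ in $(\mathcal N,g)$ is the image under $\pi$ of a horizontal geodesic $\tilde\gamma$ in $\mathcal M$, whose endpoints $\tilde p,\tilde q$ are horizontal lifts of $p,q$. Because $\pi:\mathcal M\to\mathcal N$ is a Riemannian submersion and the horizontal lift of a geodesic is again a geodesic carrying the same arc-length parameter, the midpoint $\tilde r=\tilde\gamma(1/2)$ satisfies $\pi(\tilde r)=\gamma(1/2)=r$. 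Thus it suffices to approximate $\tilde r$ inside $\mathcal M$ and then apply $\pi$.

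Second, I would reduce the computation of $\tilde r$ to a geometric mean of positive definite matrices. Since $\mathcal M$ is totally geodesic in $\mathcal L=\syms{2n+1}{+}$, and $\mathcal L$ is in turn totally geodesic in the full cone $\sym{2n+1}{+}$ with its affine-invariant metric, the segment $\tilde\gamma$ is a geodesic of $\sym{2n+1}{+}$ as well. Hence $\tilde r$ is exactly the geodesic midpoint of $\tilde p$ and $\tilde q$ in the cone, i.e.\ their geometric mean $\tilde p\,\#\,\tilde q$, which is precisely the quantity computed by the AHM algorithm. Applying Nakamura's AHM algorithm \cite{Nak:AHM} to the pair $(\tilde p,\tilde q)$, with $A_0=\tilde p$, $H_0=\tilde q$ and the recursion $A_{k+1}=\tfrac12(A_k+H_k)$ (arithmetic branch) and $H_{k+1}=2(A_k^{-1}+H_k^{-1})^{-1}$ (harmonic branch), both $A_k$ and $H_k$ converge monotonically in the Loewner order to $\tilde r$. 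Projecting by the block map $\pi$ and using its continuity, $p_k:=\pi(A_k)$ and $q_k:=\pi(H_k)$ converge to $\pi(\tilde r)=r$, which would finish the argument.

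The main obstacle is that the intermediate iterates $A_k,H_k$ leave $\mathcal M$ (indeed they even leave $\mathcal L$, since neither the arithmetic nor the harmonic mean preserves the determinant), so a priori $p_k,q_k$ need not lie in $\mathcal N$; and one cannot simply retract them back onto $\mathcal M$, because the nearest-point projection onto the reflective, hence geodesically convex, submanifold $\mathcal M$ sends $X$ to the midpoint of $X$ and its reflection, collapsing both branches to $\tilde r$ at once and destroying the algorithm. The way through is to exploit the reflective symmetry itself: the involution $\sigma(X)=JX^{-1}J$ is an isometry of $\sym{2n+1}{+}$ with fixed-point set $\mathcal M$, and a short computation gives $\sigma\circ\mathrm{AM}=\mathrm{HM}\circ(\sigma\times\sigma)$, whence $\sigma(A_k)=H_k$ for all $k\ge 1$. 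Thus the two branches are genuine mirror images across $\mathcal M$ and squeeze onto $\tilde r\in\mathcal M$ from opposite sides, each staying in controlled position relative to $\mathcal M$.

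To guarantee membership in $\mathcal N$ on the nose, I would post-compose the block map with the natural normalization onto $\mathcal N$ (rescaling the last coordinate so that the Schur complement of $\Theta$ equals $1$), which fixes $\mathcal N$ pointwise and is continuous near $r$; the resulting points $p_k,q_k\in\mathcal N$ remain distinct and still converge to $r$ by continuity together with $A_k,H_k\to\tilde r$. Verifying this last compatibility between the AHM recursion, the symmetry $\sigma$, and the submersion $\pi$ — in particular that the normalized projections of the two branches do not degenerate to a single sequence — is where the real work lies.
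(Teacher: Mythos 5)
Your route is the paper's route: lift $p,q$ via Theorem \ref{thm:main1} to the endpoints of a horizontal geodesic in $\mathcal M$, regard that geodesic as a geodesic of the cone of positive definite matrices through the totally geodesic inclusions $\mathcal M\subset\mathcal L\subset\sym{2n+1}{+}$, run the AHM iteration \eqref{eq:AHMalg}--\eqref{eq:AHMalg2} of Theorem \ref{thm:Nakamura} to approximate the lifted midpoint $\tilde r$, and push the result back down. Where you differ is in how the intermediate iterates are returned to $\mathcal N$, and there your caution is justified rather than excessive. The paper's Lemma \ref{lem:sym} asserts that the iterates remain in $\syms{2n+1}{+}$ and then applies $\pi$ to them directly; but your identity $\sigma\circ\mathrm{AM}=\mathrm{HM}\circ(\sigma\times\sigma)$ for $\sigma(X)=JX^{-1}J$ forces $\det Q_k=(\det P_k)^{-1}$, and already $P_0=\operatorname{diag}(2,1,1/2)$, $Q_0=\id_3$ (both fixed by $\sigma$, both of determinant $1$) give $\det P_1=9/8$. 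So the arithmetic mean does not preserve the determinant, the iterates leave $\mathcal L$ and a fortiori $\mathcal M$, and $\pi$ is not literally defined on them: you have isolated a genuine gap in the published argument, not created one in your own.

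Your repair is the right one and is easier to close than you suggest. The normalized projection (take the upper-left $(n+1)\times(n+1)$ block, which is positive definite, and replace its $(n+1,n+1)$ entry by $1+\delta^T\Theta^{-1}\delta$ so that the Schur complement of $\Theta$ equals $1$) is continuous on the open cone, restricts to $\pi$ on $\mathcal M$, and since Theorem \ref{thm:Nakamura} gives $P_k,Q_k\to\tilde r\in\mathcal M$, continuity alone yields $p_k,q_k\to\pi(\tilde r)=r$. The final point you flag as ``the real work'' --- that the two projected branches not degenerate into a single sequence --- is not required: the statement only asks for two sequences with $p_0=p$, $q_0=q$ both converging to $r$, with no distinctness along the way. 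Once you record that the normalized projection is well defined near $\tilde r$ and agrees with $\pi$ on $\mathcal M$, your proof is complete, and it is in fact a corrected version of the paper's.
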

  The proof of Theorem \ref{thm:main2} will be given in Section \ref{subsc:AHM} below.
 
 In the above construction, the block Cholesky decomposition 
 has a key role. 
 As in \eqref{eq:MD},  for any $G \in \syms{2n+1}{+}$ there exist unique matrices 
 $M$ and $D$ such that $G = M D M^T$ holds, where $M$ is a block 
 lower triangular matrix with identity block diagonal matrices and $D$ 
 is a block diagonal matrix, respectively.
  Setting $G_1 = MD$ and $G_2 = M^T$,
  we have 
 \begin{equation}\label{eq:Chol}
 G = G_1 G_2 \quad \mbox{for any $G \in \syms{2n+1}{+}$},
 \end{equation}
 where $G_1$ is the unique block lower triangular matrix and 
 $G_2$ is the unique block upper triangular matrix with identity block diagonal 
 matrices, respectively.

 Since we have a horizontal geodesic $G(t) = \exp (t V)$ and the block Cholesky dedomposition, 
 it is natural to expect some integrable system (Lax pair) associate to them, see \cite[Chapter 8]{Gue}.
 Then the main result of the paper is the following:
\begin{themain}\label{thm:main3}
 Let $G(t)=\exp (t V)$ be the horizontal geodesic in \eqref{eq:GtiniV} 
 of the symmetric space $\mathcal M$ through the identity and set $G(t)=G_1(t) G_2(t)$ to 
be the block Cholesky decomposition as in 
 \eqref{eq:Chol}.
 Then the adjoint orbit 
 \begin{equation}\label{eq:L}
 L(t)=\ad (G_1(t)^{-1})  V:\R   \to \so{n+1,}{n},
\end{equation} 
 defines the following system of ODEs (Lax equation)
 in the Lie algebra $\so{n+1,}{n}$$:$
 \begin{equation}\label{eq:Lax}
 \dot L = [L, M], \quad 
 L = 
 \begin{pmatrix}
 -Q & r & 0 \\
 a_0^T & 0 & -r^T \\
 0 & -a_0 & Q^T 
 \end{pmatrix},
 \quad 
 M =
 \begin{pmatrix}
 -Q & 0 & 0 \\
 a_0^T & 0 & 0 \\
 0 & -a_0 & Q^T 
 \end{pmatrix},
 \end{equation}
 where $r: \R \to \R^n, Q: \R \to \operatorname{M}(n \times n, \R)$,
 and $a_0\in \R^n$ is the constant vector defined in \eqref{eq:GtiniV}.
  More explicitly \eqref{eq:Lax} is equivalent to the following system of ODEs
 for $r$ and $Q$$:$
 \begin{equation}\label{eq:Laxexplicit}
 \dot Q = - r a_0^T, \quad  \dot r =  Q r,
 \end{equation}
 and it is further equivalent to 
 \begin{equation}\label{eq:Laxexplicit2}
 2 \dot Q = Q^2 - A_0^2 -  2 a_0a_0^T,  \quad\dot r =  Q r,
 \end{equation}
 with the initial condition $Q(0) = A_0$ and $r(0)= a_0$.
 
 Conversely, any solution of \eqref{eq:Lax} with
 the initial condition 
 $L(0) = V$ given in \eqref{eq:GtiniV} 
 can be obtained by \eqref{eq:L}.
 \end{themain}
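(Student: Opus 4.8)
The plan is to read off the Lax pair directly from \eqref{eq:L} and only afterwards determine the blocks. Since $V$ is constant, $\dot G = VG$, and differentiating $L=\ad(G_1^{-1})V = G_1^{-1}VG_1$ gives
\[
\dot L = -\,G_1^{-1}\dot G_1\,G_1^{-1}VG_1 + G_1^{-1}V\dot G_1 = [L,M], \qquad M := G_1^{-1}\dot G_1 .
\]
To see that this $M$ is the matrix displayed in \eqref{eq:Lax}, I would differentiate the factorization $G=G_1G_2$ of \eqref{eq:Chol} and use $\dot G=VG$ to obtain $L=G_1^{-1}\dot G_1+\dot G_2G_2^{-1}$. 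As $G_1$ is block lower triangular, $G_1^{-1}\dot G_1$ is block lower triangular; as $G_2$ is block upper triangular with identity diagonal blocks, $\dot G_2$ has vanishing diagonal blocks and $\dot G_2G_2^{-1}$ is block strictly upper triangular. Hence $M$ is exactly the block lower triangular part of $L$ (diagonal blocks included), so \eqref{eq:Lax} is a genuine autonomous system $\dot L=[L,M(L)]$.

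Next I would prove that $L$ is valued in $\so{n+1,}{n}$ and has the sparse shape in \eqref{eq:Lax}. The key point, which I expect to be the main obstacle, is that the block Cholesky factor inherits the symmetry of $\mathcal M$: applying $J(\cdot)J$ to the decomposition $G=MDM^{T}$ of Lemma \ref{lem:geo}, using $JDJ=D^{-1}$ and the uniqueness of the factorization, one gets $JM^{-1}J=M^{T}$ and therefore $JG_1^{-1}J=G_1^{T}$ for $G_1=MD$. Together with the elementary identity $JVJ=-V$ for the generator in \eqref{eq:GtiniV}, this yields
\[
JLJ = (JG_1^{-1}J)(JVJ)(JG_1J) = G_1^{T}(-V)(G_1^{T})^{-1} = -L^{T},
\]
so $L(t)\in\so{n+1,}{n}$ for all $t$. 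Writing a general element of $\so{n+1,}{n}$ in $(n,1,n)$-block form and imposing $L(0)=V$ then forces $L$ and its block lower triangular part $M$ into precisely the forms in \eqref{eq:Lax}. I regard this interplay between the block Cholesky decomposition and the involution $J$ as the heart of the argument: it is what turns the mere $\ad$-orbit of $V$ into a curve in the symmetric Lie algebra and produces the skew shapes of $L$ and $M$.

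With the shapes fixed, \eqref{eq:Laxexplicit} is a short block computation: writing $N=L-M$ for the strictly upper part and evaluating $[L,M]=[N,M]$, the $(1,1)$ and $(1,2)$ blocks give $\dot Q=-r a_0^{T}$ and $\dot r=Qr$, the $(3,2)$ block gives $\dot a_0=0$, and the corner blocks stay zero; thus the sparse form is preserved, and uniqueness for this polynomial ODE with $L(0)=V$ identifies $L$. For \eqref{eq:Laxexplicit2} I would differentiate $\dot Q=-r a_0^{T}$ and substitute $\dot r=Qr$ to get $\ddot Q=Q\dot Q$, then integrate once with $Q(0)=A_0$, $r(0)=a_0$, the matrix counterpart of the separation $Q\,dQ=-a_0\,dr$; carrying out this integration is the one step that must be handled with care. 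Finally the converse is immediate: since $M=M(L)$ is the block lower triangular part of $L$, equation \eqref{eq:Lax} is an autonomous ODE with locally Lipschitz right-hand side, so by uniqueness any solution with $L(0)=V$ coincides with the orbit \eqref{eq:L}; this is the factorization (projection) method for Lax equations, cf.\ \cite[Chapter 8]{Gue}.
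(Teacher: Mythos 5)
Your derivation of the Lax equation itself is correct and in fact more explicit than the paper's own proof: the paper never writes down $M=G_1^{-1}\dot G_1$ nor the identity $L=G_1^{-1}\dot G_1+\dot G_2 G_2^{-1}$, so your identification of $M$ as the block lower triangular projection $\Pi(L)$ of $L$ (which also settles the converse via ODE uniqueness) is a genuine improvement in completeness. Where you diverge is in establishing the sparse shape of $L$, and here the paper's mechanism is different and more direct: since $G=\exp(tV)$ commutes with $V$, one has $L=\ad(G_1^{-1})V=\ad(G_2G^{-1})V=\ad(G_2)V$, and conjugation by the block lower triangular $G_1$ kills the $(1,3)$ block while conjugation by the unipotent block upper triangular $G_2$ kills the $(3,1)$ block and shows the $(2,1)$ and $(3,2)$ blocks remain the constants $a_0^T$ and $-a_0$. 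Your substitute, the symmetry $JG_1^{-1}J=G_1^T$ inherited from the block Cholesky factors, correctly proves $L(t)\in\so{n+1,}{n}$, but that is strictly weaker than the shape in \eqref{eq:Lax}: a general element of \eqref{eq:g} carries arbitrary skew corner blocks $R,S$ and an arbitrary vector in the $(2,1)$ slot, so ``imposing $L(0)=V$'' does not force the form for $t\neq 0$. That sentence is a non sequitur. Your later remark that the sparse form is preserved by the autonomous flow $\dot L=[L,\Pi(L)]$ and that uniqueness then identifies $L$ does repair the gap, but only if you reorder the argument: first $\dot L=[L,\Pi(L)]$, then tangency of $[N,M]$ to the affine subspace of matrices of shape \eqref{eq:Lax}, then $L(0)=V$ plus uniqueness. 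As written, that computation is introduced ``with the shapes fixed,'' i.e.\ assuming what it is meant to prove.

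The step you flag as needing care, the passage from \eqref{eq:Laxexplicit} to \eqref{eq:Laxexplicit2}, is genuinely missing and your proposed route does not work for $n\geq 2$. From $\ddot Q=Q\dot Q$ one cannot ``integrate once'' to $2\dot Q=Q^2+\mathrm{const}$, because $\tfrac{d}{dt}Q^2=\dot QQ+Q\dot Q$ equals $2Q\dot Q=2\ddot Q$ only when $[Q,\dot Q]=[Q,ra_0^T]=0$; at $t=0$ this commutator is $[A_0,a_0a_0^T]=(A_0a_0)a_0^T-a_0(A_0a_0)^T$, which vanishes only when $a_0$ is an eigenvector of $A_0$. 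So the matrix analogue of separation of variables fails, and the claimed equivalence with \eqref{eq:Laxexplicit2} requires an additional commutation identity or a direct verification from the explicit formulas $Q=-\tilde\Theta^{-1}\dot{\tilde\Theta}$, $r=\tilde\Theta^{-1}a_0$. The paper offers only ``straightforward computations'' for this clause, so you are not alone in leaving it open, but you should not present the naive integration as the proof of \eqref{eq:Laxexplicit2}.
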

\begin{Remark}
 For $n=1$, the Lax equation becomes the finite non-periodic Toda lattice \cite[Chapters 5, 8]{Gue},
 and thus \eqref{eq:Lax} gives a Toda lattice 
 type Lax pair. The construction of the Toda lattice like Lax pair is similar to the Kostant-Toda lattices construction
 given in \cite{Kos}. However, the author would not know a precise relation.
\end{Remark}
 The proof of Theorem \ref{thm:main3} will be 
 given in Section \ref{sc:Lax} below.
\section{Geodesics and AHM algorithm}
\subsection{An explicit construction of geodesics}\label{subsc:explictgeod}
It is known \cite[Theorem 6.1]{Sko} that the Riemannian geodesic equations for $\mathcal N = (\Sigma, \mu)$ 
can be formulated as
\begin{align}\label{eq:Geo0}
\ddot \Sigma + \dot \mu \dot \mu^T - \dot \Sigma \Sigma^{-1} \dot \Sigma = 0, \quad 
\ddot \mu + \dot \Sigma \Sigma^{-1} \dot \mu = 0,
\end{align}
 where dot denotes derivative with respect to 
  a parameter $t$. 
 From the second equation, $\frac{d}{dt}(\Sigma^{-1} \dot \mu)=0$
 holds, that is, there exists some constant $a_0 \in \R^n$ such that 
 $\Sigma^{-1} \dot \mu =  a_0$
 holds. Moreover from the first equation, 
 $\frac{d}{dt}(\Sigma^{-1} \dot \Sigma + a_0 \mu^T )=0$
  holds, that is,  there exists some constant $A_0 \in \sym{n}{}$ 
  such that  $\Sigma^{-1} \dot \Sigma + a_0 \mu^T= A_0$
 holds.
 Thus the system \eqref{eq:Geo0} can be integrated 
 by an  initial point $(\id_p, 0)$ and an initial direction 
$(A_0, a_0) \in \sym{n}{} \times \R^n$.
 Introducing 
\begin{equation*}
\Theta = \Sigma^{-1}  \in \sym{n}{+} \quad \mbox{and}\quad  \delta = \Sigma^{-1} \mu \in \R^n, 
\end{equation*}
the geodesic equations 
in \eqref{eq:Geo0} can be formulated as 
\begin{equation}\label{eq:Geo2}
\frac{d}{dt} \begin{pmatrix}
 \Theta &  \delta
\end{pmatrix}
= \begin{pmatrix}
-A_0 & a_0
\end{pmatrix}
\begin{pmatrix}
\Theta &  \delta \\
 \delta^T & 1 + \|\delta \|^2_{\Theta^{-1}}
\end{pmatrix},
\quad \Theta(0)= \id, \; \delta (0)=0, 
\end{equation}
where $\|\delta \|^2_{\Theta^{-1}} = \delta^T \Theta^{-1} \delta$.
From the form of \eqref{eq:Geo2} it is natural to expect that it could be solved by 
exponential of $(A_0, a_0)$. 
Unfortunately, the simplest matrix exponential 
$\exp \left\{t \left(\begin{smallmatrix}- A_0 & a_0 \\
a_0 & 0 \end{smallmatrix}\right)\right\}$ is 
not a solution of \eqref{eq:Geo2} by 
the term $1 + \|\delta \|^2_{\Theta^{-1}}$
of the right hand side of \eqref{eq:Geo2}.
To understand the term $1+\|\delta \|^2_{\Theta^{-1}}$, let us consider the block Cholesky decomposition of 
the matrix in the right hand side of \eqref{eq:Geo2}:
\[
 \begin{pmatrix}
\Theta &  \delta \\
 \delta^T & 1 + \|\delta \|^2_{\Theta^{-1}}
\end{pmatrix} 
 = \begin{pmatrix}
  \id & 0 \\ \delta^T \Theta^{-1}&  1
 \end{pmatrix}
 \begin{pmatrix}
 \Theta & 0 \\
 0 & 1
 \end{pmatrix}
 \begin{pmatrix}
 \id & \Theta^{-1} \delta \\
  0 & 1
 \end{pmatrix}.
\]
 Then the second term in the right-hand side has the special property, that is, 
 the $(n+1, n+1)$-entry is just $1$. 
 Thus we would like to find a exponential 
 matrix which has such a middle term property in a larger matrix:
  Let 
 \begin{equation}\label{eq:J}
J=
\begin{pmatrix}
&&\id_{n}\\
& \id_{1}& \\
\id_{n}&&
\end{pmatrix},
\end{equation} 
 where  $\id_j$ denotes the identity matrix of degree $j$.
 In fact the following lemma gives an answer in order 
$2n+1$ positive definite matrices with determinant $1$, $\syms{2n+1}{+}$:
\begin{Lemma}[Lemma 7 in \cite{Ino:IJMI}]\label{lem:geo}
 Let $G\in \syms{2n+1}{+}$ with symmetry $JG^{-1}J = G$, where
 $J$ is in  \eqref{eq:J} and consider 
 the block Cholesky decomposition of 
 \[
 G = M D M^T, \quad M \in \mathcal L, D \in \mathcal A
 \] 
 where  
\begin{align*}
\mathcal L 
&= \left\{
\begin{pmatrix}
\id_n & 0 & 0\\
L_{21} & 1 & 0\\
L_{31} & L_{32} & \id_n
\end{pmatrix} \in \operatorname{GL}(2n+1, \R)
\right\}, \\
 \mathcal A
&= \left\{\begin{pmatrix}
A_1 & 0 & 0\\
0 & A_2 & 0\\
0 & 0 &A_3 
\end{pmatrix}\,\Big|\,
A_1, A_3 \in \sym{n}{+}, A_2 \in \sym{1}{+}
\right\}.
\end{align*}
 Then it
  can be computed as 
 \begin{equation}\label{eq:LandD}
 M = \begin{pmatrix}
  \id_n & 0  &0 \\ \tilde \delta^T \tilde \Theta^{-1}&  1 & 0 \\
   * &  -  \tilde \Theta^{-1} \tilde \delta & \id_n \\
 \end{pmatrix}, \quad 
  D=
 \begin{pmatrix}
 \tilde \Theta & 0  & 0 \\
 0 & 1 & 0  \\
 0 & 0 & \tilde \Theta^{-1} 
 \end{pmatrix},
  \end{equation}
  where $\tilde \Theta  \in \sym{n}{+}$ and $\tilde \delta \in \R^n$
  are matrix valued functions of $t$, and $*$ denotes some $n\times n$ matrix.
  \end{Lemma}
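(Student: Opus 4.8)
The plan is to combine the \textbf{existence and uniqueness} of the block Cholesky (block $LDL^T$) decomposition with the symmetry $JG^{-1}J = G$, using the key observation that conjugation by $J$ \emph{reverses the block ordering}. First I would recall that for any $G \in \syms{2n+1}{+}$ there is a unique factorization $G = MDM^T$ with $M \in \mathcal{L}$ and $D = \operatorname{diag}(A_1, A_2, A_3) \in \mathcal{A}$: this is the standard block $LDL^T$ decomposition of a positive definite matrix relative to the partition $(n,1,n)$, whose diagonal blocks $A_1, A_2, A_3$ are positive definite. Since $M$ has identity diagonal blocks, $\det M = 1$, so $\det G = \det D$. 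It therefore suffices to show that the symmetry forces $A_1 = \tilde\Theta$, $A_2 = 1$, $A_3 = \tilde\Theta^{-1}$, together with the displayed relations among the off-diagonal blocks of $M$.

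Second, I would manufacture a \emph{second} admissible block Cholesky decomposition of $G$ out of the symmetry and then invoke uniqueness. Writing $P = M^{-1} \in \mathcal{L}$, one has $G^{-1} = P^T D^{-1} P$, and inserting $J^2 = \id$ gives
\[
JG^{-1}J = (JP^TJ)(JD^{-1}J)(JPJ) = \widehat{M}\,\widehat{D}\,\widehat{M}^T,
\]
where $\widehat{M} := (JPJ)^T$ and $\widehat{D} := JD^{-1}J$, using $J^T = J$. The point is that conjugation by $J$ interchanges the first and third $n\times n$ blocks while fixing the middle block, so it sends a block lower triangular matrix with identity diagonal blocks to a block upper triangular one and vice versa. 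Consequently $JPJ$ is block upper triangular with identity diagonal blocks, whence $\widehat{M} = (JPJ)^T \in \mathcal{L}$, while $\widehat{D} = \operatorname{diag}(A_3^{-1}, A_2^{-1}, A_1^{-1}) \in \mathcal{A}$. Thus $\widehat{M}\,\widehat{D}\,\widehat{M}^T$ is a genuine block Cholesky decomposition of $JG^{-1}J$.

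Third, since $JG^{-1}J = G$ by hypothesis, uniqueness yields $M = \widehat{M}$ and $D = \widehat{D}$. The equation $D = \widehat{D}$ gives $A_1 = A_3^{-1}$ and $A_2 = A_2^{-1}$; as $A_2$ is a positive scalar this forces $A_2 = 1$, and setting $\tilde\Theta := A_1$ produces the claimed $D$ (as a consistency check, $\det G = \det D = \det\tilde\Theta\cdot 1\cdot\det\tilde\Theta^{-1} = 1$ is then automatic). The equation $M = \widehat{M}$, after computing $P = M^{-1}$ and $\widehat{M} = (JPJ)^T$ block by block, yields $L_{21} = -L_{32}^T$ and $L_{31} + L_{31}^T = L_{32}L_{21}$. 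Defining $\tilde\delta$ by $L_{32} = -\tilde\Theta^{-1}\tilde\delta$ and using $\tilde\Theta = \tilde\Theta^T$ then gives $L_{21} = \tilde\delta^T\tilde\Theta^{-1}$, which is exactly the displayed form of $M$; the relation $L_{31} + L_{31}^T = L_{32}L_{21}$ fixes only the symmetric part of $L_{31}$, leaving its antisymmetric part undetermined, which is the entry recorded as $*$.

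I expect the one genuinely delicate step to be the \textbf{bookkeeping of the $J$-conjugation} on the triangular structure: verifying cleanly that $JPJ$ is block upper triangular with identity diagonal blocks and that $(JPJ)^T$ lands back in $\mathcal{L}$, so that uniqueness of the block Cholesky decomposition legitimately applies. Everything else is routine block matrix algebra; the symmetry does the real work by supplying a second admissible factorization that uniqueness then identifies with the original one.
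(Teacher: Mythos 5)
Your proposal is correct and follows essentially the same route as the paper, whose proof is the one-line assertion that the symmetry $JG^{-1}J=G$ forces the stated forms of $M$ and $D$; you simply carry out that implication in full, by producing the second factorization $\widehat{M}\widehat{D}\widehat{M}^T$ of $JG^{-1}J$ and invoking uniqueness of the block Cholesky decomposition (whose existence the paper likewise takes as given, citing Lemma 5 of the Inoue reference). Your block computations, including the relations $L_{21}=-L_{32}^T$ and $L_{31}+L_{31}^T=L_{32}L_{21}$, check out and are consistent with the symmetry relation the paper records later in Section 3.
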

 \begin{proof}
  The symmetry $J G^{-1} J = G$ implies the forms of $L$ and $D$ in \eqref{eq:LandD}.
 \end{proof}
 \begin{Remark}
  A proof of existence of the block Cholesky decomposition can be found in \cite[Lemma 5]{Ino:IJMI}. 
 \end{Remark}
 Since the matrix $V$  in \eqref{eq:GtiniV} has a symmetry $- J V J = V$, 
 $G(t) = \exp (t V)$ in \eqref{eq:GtiniV} has the symmetry $JG^{-1}J  = G$.
 Therefore the block lower triangular matrix $M$ and the block diagonal matrix 
 $D$ have the forms as in \eqref{eq:LandD}.
 Let $H$ be a sub-matrix of $G$ given by first $n+1$
 rows and first $n+1$ columns. From the forms of $M$ and $D$ in \eqref{eq:LandD}, it can be computed as
 \begin{equation}\label{eq:H}
   H = \pi(G) = \begin{pmatrix}
  \tilde \Theta & \tilde \delta \\
  \tilde \delta^T & 1 + \|\tilde \delta \|_{\tilde \Theta^{-1}}^2 
  \end{pmatrix},
 \end{equation}
 and it is easy to see that $H$ is 
 a Riemannian geodesic in $\mathcal N$.
\begin{Theorem}[\cite{Eri}]
 The sub-matrix $H(t)$ of $G(t)$ 
 defines a Riemannian geodesic through the initial point $(\id_p,0)$
  and the direction $(A_0, a_0)$.
\end{Theorem}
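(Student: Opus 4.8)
The plan is to extract the geodesic equation \eqref{eq:Geo2} directly from the linear matrix ODE satisfied by $G(t)=\exp(tV)$, once the blocks of $H(t)$ are identified with genuine entries of $G(t)$. First I would pin down this identification. Writing $G$ in $(n,1,n)$ block form $G=(G_{ij})_{1\le i,j\le 3}$, Lemma \ref{lem:geo} shows that the symmetry $JG^{-1}J=G$ forces the Cholesky factors into the shape \eqref{eq:LandD}; evaluating the top-left $(n+1)\times(n+1)$ block of $G=MDM^{T}$ then yields \eqref{eq:H}, so that
\[
G_{11}=\tilde\Theta,\quad G_{12}=\tilde\delta,\quad G_{21}=\tilde\delta^{T},\quad G_{22}=1+\|\tilde\delta\|^{2}_{\tilde\Theta^{-1}}.
\]
In particular $H(t)$ is literally the upper-left corner of $G(t)$, and the scalar entry $G_{22}$ already carries the inhomogeneous term $1+\|\tilde\delta\|^{2}_{\tilde\Theta^{-1}}$.

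Next I would differentiate. Since $G(t)=\exp(tV)$ commutes with $V$, we have $\dot G=VG$, and reading off the $(1,1)$ and $(1,2)$ blocks of this identity using the block form of $V$ in \eqref{eq:GtiniV} gives
\[
\dot G_{11}=-A_0 G_{11}+a_0 G_{21},\qquad \dot G_{12}=-A_0 G_{12}+a_0 G_{22}.
\]
Substituting the four identifications above converts this pair into
\[
\dot{\tilde\Theta}=-A_0\tilde\Theta+a_0\tilde\delta^{T},\qquad \dot{\tilde\delta}=-A_0\tilde\delta+a_0\big(1+\|\tilde\delta\|^{2}_{\tilde\Theta^{-1}}\big),
\]
which is precisely $\tfrac{d}{dt}\begin{pmatrix}\tilde\Theta & \tilde\delta\end{pmatrix}=\begin{pmatrix}-A_0 & a_0\end{pmatrix}H$, i.e. equation \eqref{eq:Geo2} read block-column by block-column.

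Finally I would handle the initial data and uniqueness. At $t=0$ one has $G(0)=\id_{2n+1}$, hence $\tilde\Theta(0)=\id_n$ and $\tilde\delta(0)=0$, matching the initial point $(\id_p,0)$ in \eqref{eq:Geo2}; unwinding $\Sigma=\tilde\Theta^{-1}$ and $\mu=\tilde\Theta^{-1}\tilde\delta$ and differentiating at $t=0$ then recovers $(\dot\Sigma(0),\dot\mu(0))=(A_0,a_0)$, so the prescribed direction is attained as well. Because \eqref{eq:Geo2} is a first-order system with given initial value, its solution is unique, and therefore $H(t)$ coincides with the geodesic through $(\id_p,0)$ in the direction $(A_0,a_0)$.

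I expect the only real obstacle to be the corner identity $G_{22}=1+\|\tilde\delta\|^{2}_{\tilde\Theta^{-1}}$: the differentiation step is immediate from $\dot G=VG$, but matching the inhomogeneous term of \eqref{eq:Geo2} hinges entirely on this relation. It is exactly the ``middle-term'' property built into the enlarged model, for only the symmetry $JG^{-1}J=G$, through Lemma \ref{lem:geo}, pins the $(2,2)$ entry of the top-left block to $1+\|\tilde\delta\|^{2}_{\tilde\Theta^{-1}}$, which is what makes the naive exponential $\exp\left\{t\left(\begin{smallmatrix}-A_0 & a_0\\ a_0 & 0\end{smallmatrix}\right)\right\}$ fail while $V$ succeeds. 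I would therefore make sure \eqref{eq:H} is fully justified via Lemma \ref{lem:geo} before reading off the blocks.
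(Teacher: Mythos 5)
Your proposal is correct and follows essentially the same route as the paper: both rest on $\dot G = VG$, the identification of the top-left $(n+1)\times(n+1)$ block of $G$ via the block Cholesky form of Lemma \ref{lem:geo} (which supplies the crucial entry $G_{22}=1+\|\tilde\delta\|^2_{\tilde\Theta^{-1}}$), and uniqueness of solutions of \eqref{eq:Geo2}. You merely spell out the block-by-block computation and the verification of the initial direction, which the paper leaves implicit.
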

\begin{proof}
 The matrix valued function $G$ satisfies the
 following system of ODEs:
\[
\dot G = V G, \quad G(0) = \id.
\]
where $V$ is given in \eqref{eq:GtiniV}.
Thus the sub-matrix $H$ in \eqref{eq:H} satisfies the same system 
of the geodesic equation \eqref{eq:Geo2} for $(\Theta, \delta)$ and the uniqueness of ODEs implies that $\Theta = \tilde \Theta $ and $\delta = \tilde \delta$.
\end{proof}
\begin{Remark}
 The embedding of $\mathcal N$ into 
 $\sym{n+1}{+}$ as in \eqref{eq:N} had been considered in \cite{CO2, LMR}.
 Note that in there $(\mu, \Sigma) \in (\R^n, \sym{n}{+})$ had been embedded as
 \[
 \begin{pmatrix}
 \Sigma + \mu \mu^T & -\mu \\    
  -\mu^T & 1
 \end{pmatrix} \subset \sym{n+1}{+}.
 \]
 Our embedding is the inverse of it, that is,
 \[
 \begin{pmatrix}
 \Sigma + \mu \mu^T & -\mu \\    
  -\mu^T & 1
 \end{pmatrix} ^{-1}
 = \begin{pmatrix}
\Theta &  \delta \\
 \delta^T & 1 + \|\delta \|^2_{\Theta^{-1}}
\end{pmatrix}. 
 \]
 Geodesics of the multivariate normal manifold 
 had been first computed by \cite{Yo}
 for $n=1$. In fact for $n=1$, the multivariate normal manifold becomes 
 the two-dimensional hyperbolic space $\mathbb H^2$.
\end{Remark}
\subsection{Riemannian submersion}\label{subsc:Rimsub} 
 We would like to  understand geometrically
 the construction of the previous section. 
 First it is clear that $\SL$ acts on $\syms{2n+1}{+}$ by 
\begin{equation}\label{eq:actSL}
  g\mapsto g P g^T, \quad g \in \SL, \quad P \in \syms{2n+1}{+}.
\end{equation}  
 Then the stabilizer at identity is given by 
\begin{align*}
  \operatorname{Fix}_{\tau}(\SL) &=
\left\{ g \in \SL \mid \tau (g) = g\right\}  = \SO{2n+1}{},
 \end{align*}
  where the involution $\tau$ is given by 
 \begin{equation}\label{eq:tau}
  \tau(g) = (g^{T})^{-1}, \quad g \in \SL.
 \end{equation}
 Thus $\syms{2n+1}{+}$ can be realized as a non-compact 
 Riemannian symmetric space of type \textrm{AI}:
\[
\mathcal L = \SL/\SO{2n+1}{}.
\]
 By using the involution in $J g^{-1}J$ for $\syms{2n+1}{+}$
 as in Lemma \ref{lem:geo} and the action in \eqref{eq:actSL}
 one can induce an involution 
 of $\SL$ as follows:
\begin{equation}\label{eq:sigmagroup}
    \sigma (g) = J g^{-T} J, \quad g \in \SL,
\end{equation}
 and the fixed point set
 of the involution $\sigma$ can be computed as
  \begin{align*}
  \operatorname{Fix}_{\sigma} (\SL) &=
\left\{ g \in \SL \mid \sigma (g) = g\right\}  = \SO{n+1,}{n},
 \end{align*}
 see \eqref{eq:invlie} for the Lie algebra computation.
 Since $\sigma$ in \eqref{eq:sigmagroup} and $\tau$ in \eqref{eq:tau}
 commute, one can consider 
 a fixed point set of $\tau$ on $\operatorname{Fix}_{\sigma}(\SL) = \SO{n+1,}{n}$, which is $\operatorname{S}(\OO{n+1}{} \times \OO{n}{})$, and the 
 corresponding homogeneous space is given by 
   \begin{equation*}
\mathcal M= \SO{n+1,}{n}/\operatorname{S}(\OO{n+1}{} \times \OO{n}{}) \subset 
\mathcal L = \syms{2n+1}{+},
\end{equation*}
 which is an another non-compact Riemannian symmetric space type {\rm BDI}.
 Note that a standard reference for symmetric spaces is \cite{Hel}. 
 
 On the Lie algebra level,  $\mathfrak g = \so{n+1,}{n}$ can be realized  in $\sl$ as follows:
Let $\sigma$ be an involution on $\sl$ given by 
\begin{equation}\label{eq:invlie}
\sigma (X) = - J X^T J, \quad X \in \sl
, \quad 
\end{equation}
 where $J$ is defined in \eqref{eq:J}. Note that $\sigma$ is a 
 derivative of the involution for $\SL$ in \eqref{eq:J} and 
$J^2 =\id_{2n+1}$ holds. The eigenvalues of 
$\sigma$ are $1$ with multiplicity $n+1$ and $-1$ with multiplicity $n$.
Thus the fixed point subalgebra $\mathfrak g = \operatorname{Fix}_{\sigma}(\sl)$ 
of $\sigma$ is isomorphic to $\so{n+1,}{n}$:
\begin{align}\label{eq:g}
 \mathfrak g &= 
 \left\{
 \begin{pmatrix}
 -Q & r & R \\
  t^T & 0 & -r^T \\
 S & -t & Q^T
 \end{pmatrix}\,\mid\,
Q\in \operatorname{M}(n\times n, \R), \,
R, S\in \skw{n}, \,
r, t \in \R^n
 \right\}, \\
  & \cong \so{n+1,}{n} \nonumber
\end{align}
 Moreover, let $\tau$ be an involution on $\mathfrak g= \so{n+1,}{n}$:
 \begin{equation*}
     \tau (X) = - X^T, \quad X \in \so{n+1, }{n}.
 \end{equation*}
 Then the fixed point subalgebra $\mathfrak k = \operatorname{Fix}_{\tau} (\mathfrak g)$
 can be computed as
 \begin{align}\label{eq:k}
 \mathfrak k &= 
 \left\{
 \begin{pmatrix}
 -Q & r & R \\
  -r^T & 0 & -r^T \\
 R & r & Q^T
 \end{pmatrix}\,\mid\,
Q, R\in \skw{n}, \,
r \in \R^n
 \right\}, \\
  & \cong \mathfrak{o}(n+1) \times \mathfrak{o}(n).
  \nonumber
\end{align}
The compliment subspace $\mathfrak m$ can be computed 
as 
\begin{equation}\label{eq:m}
\mathfrak m = 
 \left\{
 \begin{pmatrix}
 -Q & r & R \\
  r^T & 0 & -r^T \\
- R & -r & Q
 \end{pmatrix}\,\mid\,
Q\in \sym{n}{}, \,
R\in \skw{n}, \,
r \in \R^n
 \right\}.
\end{equation}
 Note that $\mathfrak m \subset \sym{2n+1}{}$.
 Thus by \eqref{eq:g}, \eqref{eq:k} and \eqref{eq:m}, we have 
\[
\mathfrak g = \mathfrak k \oplus \mathfrak m,
\]
 and $[\mathfrak k, \mathfrak k]\subset \mathfrak k$, $[\mathfrak m, \mathfrak m]\subset \mathfrak k$ and
 $[\mathfrak m, \mathfrak k] \subset \mathfrak m$
 hold. 
 It is evident that the tangent space of $\mathcal M$ at the identity 
 can be represented by $\mathfrak m$ in \eqref{eq:m}, see 
 \cite{Hel} in details.

 The Riemannian metric of $\mathcal M$ at identity can be induced from a Killing  
metric on $\mathcal L = \syms{2n+1}{+} = \SL/\SO{2n+1}{}$:
\[
 \langle A, B\rangle = \tr (A B) \quad \mbox{for $A, B \in \sym{2n+1}{}$.}
\]
 Moreover, any geodesic in $\mathcal M$ through identity is given by 
 $\tilde G(t) = \exp (t \tilde V)$ for some $\tilde V \in \mathfrak m$,
 and thus $G(t)$ in \eqref{eq:GtiniV} is a geodesic in $\mathcal M$.
 From Lemma \ref{lem:geo}, we know that an element $m\in \mathcal M\subset \sym{2n+1}{+}$ can be 
 represented by 
\begin{equation}\label{eq:elementM}
 m = 
  \begin{pmatrix}
  \Theta & \delta & G_{13}\\
  \delta^T & 1 + \|\delta \|_{\Theta^{-1}}^2  &G_{23} \\
  G_{13}^T & G_{23}^T & G_{33}
  \end{pmatrix}\in \mathcal M \subset \syms{2n+1}{+},
\end{equation}
 where $\Theta \in \sym{n}{+}, \delta \in \R^n, G_{13} \in {\rm M}(n \times n, \R), 
 G_{23}  \in {\rm M}(n \times 1, \R), G_{33} \in \sym{n}{}$. 
 Since the multivariate normal manifold  $\mathcal N = \left\{(\Sigma, \mu)\mid  \Sigma \in \sym{n}{+},\; \mu \in \R^n\right\}$ can be identified by \eqref{eq:N},
 thus $\pi : \mathcal M \to \mathcal N$ is naturally defined for $m \in \mathcal M$
 in \eqref{eq:elementM} by 
\[
    \pi (m) = 
\begin{pmatrix}
  \Theta & \delta \\
  \delta^T & 1 + \|\delta \|_{\Theta^{-1}}^2 
\end{pmatrix} \in \mathcal N,
 \]
 and it is a Riemannian submersion:  
 $d \pi|_{\id}: T_{\id} \mathcal M \to 
 T_{\id} \mathcal N$ with
 \[
  T_{\id} \mathcal M \ni \begin{pmatrix}
 -Q & r & R \\
  r^T & 0 & -r^T \\
 -R & -r & Q
 \end{pmatrix} \mapsto 
 \begin{pmatrix}
 -Q & r  \\
  r^T & 0
 \end{pmatrix}
 \in T_{\id} \mathcal N.
 \]
 Note that the Riemannian metric of $\mathcal N$ at identity can be computed as
 \[
 \langle A, B\rangle = 2 \tr (A B) \quad \mbox{for $A, B \in  T_{\id} \mathcal N \subset \sym{n+1}{}$,}
 \]
 see \cite{LMR, GQ}.
 For a Riemannian submersion $\pi: \mathcal M \to \mathcal N$, a
 tangent vector to $\mathcal N$ at $p$ is \textit{horizontal} if it is orthogonal to 
 the fiber $\pi^{-1} (\pi(p))$.
 Then the horizontal subspace $\mathfrak h \subset \mathfrak m$ can be identified with 
\begin{equation*}
\mathfrak h = 
\left\{
  X = \begin{pmatrix}
 -Q & r & 0 \\
  r^T & 0 & -r^T \\
0 & -r & Q
 \end{pmatrix}
 \in \mathfrak m
\right\},
\end{equation*}
 and thus the geodesic $G(t)$  in $\mathcal M$ given in \eqref{eq:GtiniV} is in fact horizontal at identity, that is, $V \in \mathfrak h$.
 Then the following fact about Riemannian submersions is fundamental: 
\begin{Theorem}[Corollary 2 in \cite{On}]\label{thm:horizontal}
 Let  $\pi: \mathcal M \to \mathcal N$ be a Riemannian submersion.
 If a geodesic $\gamma$ of $\mathcal M$ that is horizontal at some one point, $\gamma$ is always horizontal 
 $($hence $\pi \circ \gamma$ is a geodesics of $\mathcal N)$.
\end{Theorem}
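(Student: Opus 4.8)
The plan is to derive this from O'Neill's fundamental theory of Riemannian submersions, using his $(1,2)$-tensors $A$ and $T$. Write $\mathcal H$ and $\mathcal V$ for the orthogonal projections of $T\mathcal M$ onto the horizontal and vertical distributions, and recall the definitions $A_E F = \mathcal H\nabla_{\mathcal H E}\mathcal V F + \mathcal V\nabla_{\mathcal H E}\mathcal H F$ and $T_E F = \mathcal H\nabla_{\mathcal V E}\mathcal V F + \mathcal V\nabla_{\mathcal V E}\mathcal H F$. The two algebraic facts I would first record are: (i) on horizontal vectors $A$ is skew, $A_X Y = -A_Y X$, so $A_X X = 0$; and (ii) for each vertical $U$ the endomorphism $T_U$ interchanges the horizontal and vertical distributions and is skew-adjoint, $\langle T_U X, Y\rangle = -\langle X, T_U Y\rangle$. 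Both follow from the definitions together with the metric compatibility of $\nabla$ (for (ii), expand $U\langle X,Y\rangle = 0$ for $X$ horizontal, $Y$ vertical).

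Next I would write the geodesic velocity as $\dot\gamma = H + V$ with $H = \mathcal H\dot\gamma$, $V = \mathcal V\dot\gamma$, and project the geodesic equation $\nabla_{\dot\gamma}\dot\gamma = 0$ onto the vertical distribution. Expanding $\nabla_{\dot\gamma}\dot\gamma = \nabla_H H + \nabla_H V + \nabla_V H + \nabla_V V$ and taking vertical parts, the term $\mathcal V\nabla_H H = A_H H$ vanishes by (i), the term $\mathcal V\nabla_V H = T_V H$ is linear in $V$, and the two surviving terms $\mathcal V\nabla_H V + \mathcal V\nabla_V V$ are exactly the vertical part of the covariant derivative of $V$ along $\gamma$. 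Writing $\tfrac{D}{dt} = \nabla_{\dot\gamma}$ for that derivative, this yields the identity $\mathcal V\tfrac{D}{dt}V = -\,T_V H$.

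The heart of the argument is then a Gr\"onwall estimate for $f(t) = |V(t)|^2 \ge 0$. Since $V$ is vertical, $f' = 2\langle \tfrac{D}{dt}V, V\rangle = 2\langle \mathcal V\tfrac{D}{dt}V, V\rangle = -2\langle T_V H, V\rangle$, and applying the skew-symmetry (ii) of $T_V$ converts this into $f' = 2\langle H, T_V V\rangle$. Because $T$ is tensorial, $T_V V$ is quadratic in $V$, so on any compact parameter interval $|T_V V| \le C|V|^2$, while $|H|\le|\dot\gamma|$ is constant; hence $|f'| \le C'f$. As $\gamma$ is horizontal at one point, $f$ vanishes there, and the differential inequality forces $f\equiv 0$ on the whole connected parameter interval. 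Thus $V\equiv 0$ and $\gamma$ is horizontal everywhere.

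Finally, to conclude that $\pi\circ\gamma$ is a geodesic of $\mathcal N$, I would invoke O'Neill's formula relating the Levi-Civita connections, $d\pi(\mathcal H\nabla_X Y) = \nabla^{\mathcal N}_{d\pi X}\,d\pi Y$ for horizontal $X,Y$. With $\dot\gamma = H$ now horizontal everywhere, the geodesic equation gives $\nabla_H H = 0$, hence $\mathcal H\nabla_H H = 0$, so $\nabla^{\mathcal N}_{(\pi\gamma)'}(\pi\gamma)' = d\pi(\mathcal H\nabla_H H) = 0$; since $d\pi$ is a linear isometry on the horizontal space, $\pi\circ\gamma$ is a constant-speed geodesic. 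I expect the main obstacle to be the careful bookkeeping in the vertical projection of the geodesic equation: verifying that after $A_H H = 0$ the only surviving contribution is the term $T_V H$ that is \emph{linear} in $V$, and that the skew-symmetry of $T_V$ then turns $\langle T_V H, V\rangle$ into the manifestly \emph{quadratic} quantity $\langle H, T_V V\rangle$ that drives the Gr\"onwall argument.
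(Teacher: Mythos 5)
Your proof is correct, but note that the paper itself gives no proof of this statement: it is imported verbatim as Corollary~2 of O'Neill's \emph{Submersions and geodesics}, with a remark that Hermann first obtained it via length-minimizing properties and O'Neill later gave a simpler proof. So there is nothing in the paper to compare line-by-line; what you have written is essentially a self-contained reconstruction of the O'Neill-style argument. The key steps all check out: the vertical projection of $\nabla_{\dot\gamma}\dot\gamma=0$ does reduce to $\mathcal V\tfrac{D}{dt}V=-T_VH$ once $A_HH=0$ is known, and passing to $f=|V|^2$ neatly sidesteps the fact that $\tfrac{D}{dt}V$ also has a horizontal component ($A_HV+T_VV$), since only the vertical part pairs with $V$; the skew-adjointness of $T_V$ then yields $|f'|\le C'f$ and Gr\"onwall finishes it. One small caveat: the identity $A_XX=0$ for horizontal $X$ is not quite a formal consequence of metric compatibility alone --- the standard proof reduces to basic fields and uses that $|X|^2$ is constant along fibers (equivalently, that $[X,U]$ is vertical for $X$ basic and $U$ vertical); it is a genuine, if elementary, lemma of the theory rather than a definition-chase. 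With that lemma granted, your argument is complete, and the final step (projecting $\mathcal H\nabla_HH=0$ to $\nabla^{\mathcal N}_{(\pi\gamma)'}(\pi\gamma)'=0$ via the relation between the two Levi-Civita connections on basic fields) is the standard conclusion. For the purposes of this paper, where the theorem is only quoted, your write-up would serve as a legitimate substitute for the citation.
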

\begin{Remark}
 The result first was proved by Hermann \cite{Har} by using length-minimizing properties and O'Neill \cite{On} gave a simpler proof.
\end{Remark}
 The geodesic $\gamma$ in Theorem \ref{thm:horizontal} is called the \textit{horizontal 
 geodesic}, and thus the curve $G(t)$ in \eqref{eq:GtiniV} 
 is the horizontal geodesic in $\mathcal M$. 
\begin{proof}[Proof of \textrm{Theorem \ref{thm:main1}}]
 From Theorem \ref{thm:horizontal}, 
 we understand the curve $G(t)$ is a horizontal geodesic in $\mathcal M$
 and any geodesic in $\mathcal N$ can be obtained by the Riemannian submersion 
 of a horizontal geodesic $G(t)$. This completes the proof.
\end{proof}
 
\subsection{Arithmetic harmonic mean algorithm}\label{subsc:AHM}
 It is known that $\mathcal M$ is a totally geodesic submanifold in $\mathcal L = 
 \syms{2n+1}{+}$, see \cite{Le}.  Since geodesics in $\mathcal N$
 are obtained by the projection of horizontal geodesics in $\mathcal M$, 
 therefore one can think $G(t)$ as a geodesic in $\mathcal L = \syms{2n+1}{+}$ 
 and use it to compute geodesics  in $\mathcal N$.
 In \cite[Theorem 10]{Nak:AHM}, Nakamura gave an algorithm 
 to obtain the midpoint $R_0$ of the Riemannian geodesic from 
 $P_0$ and $Q_0$ in the space of positive definite matrices.
 First note that the midpoint $R_0$ of $P_0$ and $Q_0$ is 
 given by 
 \[
 R_0 = P_0^{1/2}\left(P_0^{-1/2} Q_0 P_0^{-1/2}\right)^{1/2} P_0^{1/2},
 \]
  and in particular if $P_0 = \id_{2n+1}$, then $R_0 = Q_0^{1/2}$ holds.
 For $n=0, 1, 2, \dots$, define
 \begin{align}\label{eq:AHMalg}
  P_{n+1} &= \frac12 (P_n + Q_n),\\   
  Q_{n+1} &= 2 (P_n^{-1} + Q_n^{-1})^{-1}.
\label{eq:AHMalg2}
 \end{align}
 Then the sequences of matrices $\{P_n\}_{n=0, 1, 2, \dots}$ and $\{Q_n\}_{n=0, 1, 2, \dots}$ defined by \eqref{eq:AHMalg} and \eqref{eq:AHMalg2} 
 are all positive definite.
 \begin{Theorem}[Theorem 10 in \cite{Nak:AHM}]\label{thm:Nakamura}
     The sequences $\{P_n\}_{n=0, 1, 2, \dots}$ and $\{Q_n\}_{n=0, 1, 2, \dots}$ 
     tend to the midpoint 
     $R_0 = P_0^{1/2}(P_0^{-1/2} Q_0 P_0^{-1/2})^{1/2} P_0^{1/2}$ of 
      the geodesic segment connecting $P_0$ and $Q_0$ in a quadratic order.
 \end{Theorem}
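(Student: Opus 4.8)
The plan is to recognize $R_0=P_0\# Q_0$ as the \emph{geometric mean} $A\# B:=A^{1/2}(A^{-1/2}BA^{-1/2})^{1/2}A^{1/2}$ of positive definite matrices, which is exactly the geodesic midpoint, and then to exhibit this mean as the conserved quantity of the iteration \eqref{eq:AHMalg}--\eqref{eq:AHMalg2}. The first and decisive step is to prove that one arithmetic--harmonic step preserves the geometric mean, that is $P_{n+1}\# Q_{n+1}=P_n\# Q_n$. Writing $A=P_n$, $B=Q_n$ and setting $X=A^{-1/2}BA^{-1/2}$, I would use the congruence invariance of both the arithmetic and the harmonic mean under $Y\mapsto CYC^{T}$ to conjugate by $A^{-1/2}$, reducing the computation to the pair $\tfrac12(\id+X)$ and $2(\id+X^{-1})^{-1}$. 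The key observation is that both of these are functions of the single matrix $X$ and therefore commute, so their geometric mean is simply the square root of their product, namely $X^{1/2}$. Reversing the congruence gives $A^{1/2}X^{1/2}A^{1/2}=A\# B$, whence $P_n\# Q_n=R_0$ for every $n$.

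Next, since the whole iteration intertwines with congruence, applying $C=R_0^{-1/2}$ to every term reduces to the case $R_0=\id$; undoing this congruence at the end is a homeomorphism, so convergence is unaffected. In the normalized case $P_n\# Q_n=\id$ forces $Q_n=P_n^{-1}$, and the two recursions merge into the single Newton-type iteration $P_{n+1}=\tfrac12(P_n+P_n^{-1})$. Every such iterate is a function of $P_0$, so the $P_n$ commute and may be diagonalized simultaneously, reducing matters to the scalar iteration $x_{n+1}=\tfrac12(x_n+1/x_n)$. I would linearize this by the substitution $Y_n=(P_n-\id)(P_n+\id)^{-1}$: from the identities $P_{n+1}\mp\id=\tfrac12 P_n^{-1}(P_n\mp\id)^2$ one reads off $Y_{n+1}=Y_n^2$, whence $\|Y_n\|=\|Y_0\|^{2^n}\to 0$. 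Thus $P_n\to\id$ doubly exponentially, i.e. with quadratic order, and undoing the congruence yields $P_n,Q_n\to R_0$ at the same rate.

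The main obstacle I expect is the invariance step, since non-commutativity obstructs any naive scalar argument; the essential point that unlocks it is that after a single congruence normalization the arithmetic and harmonic means become functions of the \emph{same} matrix $X$, so the geometric mean degenerates into an ordinary square root of a product. Once this invariant is secured, convergence is almost automatic: an alternative to the linearization above is to note that the arithmetic--geometric--harmonic mean ordering gives $P_{n+1}\ge R_0\ge Q_{n+1}$, after which $P_n\ge Q_n$ makes $\{P_n\}$ decreasing and $\{Q_n\}$ increasing in the Loewner order, forcing a common limit that Step~1 identifies as $R_0$; only the explicit substitution, however, directly delivers the quadratic rate.
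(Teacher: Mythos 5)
Your proof is correct, but note that the paper itself does not prove this statement: Theorem \ref{thm:Nakamura} is imported verbatim as Theorem 10 of \cite{Nak:AHM}, and the only fragments of Nakamura's argument reproduced in the paper are the error identity $Q_{n+1}-P_{n+1}=\tfrac12 (Q_n-P_n)^2(Q_n+P_n)^{-1}$ (quoted in the remark to justify ``quadratic order'') and, inside the proof of Lemma \ref{lem:sym}, the commutativity $P_nQ_n=Q_nP_n$ after the normalization $Q_0=\id$ via Nakamura's Eq.~(30). Your argument is a clean self-contained substitute. The decisive invariance step is sound: congruence equivariance of all three means reduces one step to the pair $\tfrac12(\id+X)$ and $2(\id+X^{-1})^{-1}$ with $X=A^{-1/2}BA^{-1/2}$, these commute, their product is exactly $X$, so the geometric mean $P_n\#Q_n$ is conserved; the normalization $R_0=\id$ legitimately forces $Q_n=P_n^{-1}$ because $Z=A\#B$ is the unique positive definite solution of $ZA^{-1}Z=B$; and the substitution $Y_n=(P_n-\id)(P_n+\id)^{-1}$ with $Y_{n+1}=Y_n^2$ and $\|Y_0\|<1$ (spectral norm of a symmetric matrix) delivers the quadratic rate. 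Compared with tracking the difference $Q_n-P_n$ directly as in \cite{Nak:AHM}, your route buys a conceptual identification of the conserved quantity as the Riemannian midpoint itself and reduces everything to the scalar Newton iteration $x_{n+1}=\tfrac12(x_n+1/x_n)$, at the cost of invoking congruence invariance and the Riccati characterization of the geometric mean. Two minor caveats: the Loewner-monotonicity alternative you sketch only gives $P_n\ge Q_n$ from $n\ge 1$ onward and, as you acknowledge, yields convergence but not the rate, so the substitution argument must remain the main line; and for the application in this paper one should add the observation (the content of Lemma \ref{lem:sym}) that the iteration preserves $\syms{2n+1}{+}$, which your normalization by $R_0^{-1/2}$ does not by itself address since that congruence need not preserve determinant one.
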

\begin{Remark}
 Since  $\{P_n\}_{n=0, 1, 2, \dots}$ and $\{Q_n\}_{n=0, 1, 2, \dots}$
 in Theorem \ref{thm:Nakamura} satisfy 
\begin{align*}
 Q_{n+1}-P_{n+1} &= \frac{1}{2} (Q_n - P_n)^2 (Q_n + P_n)^{-1}, 
\end{align*}
 see \cite[p.171]{Nak:AHM}, thus they converge to $R_0$ in 
 a ``quadratic order''.
\end{Remark}
 We now prepare the following Lemma.
 \begin{Lemma}\label{lem:sym}
 Let $Q_0, P_0 \in \syms{2n+1}{+}$. Then the sequences of matrices 
 $\{P_n\}_{n=0, 1, 2, \dots}$ and $\{Q_n\}_{n=0, 1, 2, \dots}$ given by 
 \eqref{eq:AHMalg} and \eqref{eq:AHMalg2} also take values in $\syms{2n+1}{+}$.
 \end{Lemma}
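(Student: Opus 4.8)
The plan is to prove both defining properties of $\syms{2n+1}{+}$ for every iterate, namely positive definiteness and unit determinant, by induction on $n$. Positive definiteness is the routine half: if $P_n$ and $Q_n$ are positive definite, then so is the arithmetic mean $P_{n+1}=\frac12(P_n+Q_n)$, while $P_n^{-1}$ and $Q_n^{-1}$ are positive definite, hence so is their sum, and therefore so is the harmonic mean $Q_{n+1}=2(P_n^{-1}+Q_n^{-1})^{-1}$. This step is already implicit in the discussion preceding Theorem \ref{thm:Nakamura}, so the real content of the lemma is the determinant assertion.

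For the determinant I would first record the parallel-sum factorization $(P_n^{-1}+Q_n^{-1})^{-1}=P_n(P_n+Q_n)^{-1}Q_n$, which rewrites the harmonic-mean step without inverting a sum. Taking determinants and using multiplicativity gives $\det Q_{n+1}=2^{2n+1}\det P_n\,\det Q_n/\det(P_n+Q_n)$, whereas the arithmetic step gives directly $\det P_{n+1}=2^{-(2n+1)}\det(P_n+Q_n)$, the factor $2^{2n+1}$ arising because the matrices have size $2n+1$. Multiplying the two expressions, the factor $\det(P_n+Q_n)$ cancels and I obtain the invariance $\det P_{n+1}\,\det Q_{n+1}=\det P_n\,\det Q_n$, so by induction $\det P_n\,\det Q_n=\det P_0\,\det Q_0=1$ for every $n$.

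The main obstacle is precisely to pass from this product relation to the unit determinant of each individual iterate rather than merely of the product. The natural device is the symmetry $M\mapsto JM^{-1}J$ carried by the data coming from $\mathcal M\subset\syms{2n+1}{+}$: a short computation shows that this involution \emph{interchanges} the arithmetic and harmonic means, sending $P_{n+1}$ to $Q_{n+1}$ and back, so that $\det(JM^{-1}J)=\det M^{-1}$ again controls only the product $\det P_n\,\det Q_n$. I would therefore track this interchange relation along the iteration, using the $J$-symmetry of the starting pair, and the delicate point I expect to have to argue carefully is exactly that each of $\det P_n$ and $\det Q_n$ equals one and not just their product; once that bookkeeping is settled, combining it with positive definiteness yields $P_n,Q_n\in\syms{2n+1}{+}$ and closes the induction.
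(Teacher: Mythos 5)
Your two correct ingredients are positive definiteness and the conservation law $\det P_{n+1}\det Q_{n+1}=\det P_n\det Q_n$ (the parallel-sum identity $(P^{-1}+Q^{-1})^{-1}=P(P+Q)^{-1}Q$ is a clean way to get it), but the lemma asserts that \emph{each} iterate has determinant one, and that is precisely the step you leave open. The $J$-symmetry you propose to exploit does not close it: if $JP_nJ=P_n^{-1}$ and $JQ_nJ=Q_n^{-1}$, a direct computation gives $JP_{n+1}J=Q_{n+1}^{-1}$ and $JQ_{n+1}J=P_{n+1}^{-1}$, so taking determinants returns $\det P_{n+1}\det Q_{n+1}=1$ — exactly the product relation you already have, as you yourself note. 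So the ``bookkeeping'' you defer is the entire content of the lemma, and the proposal as written is not a proof.

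Moreover, the gap cannot be closed in the stated generality: take $P_0=\operatorname{diag}\bigl(2,\tfrac12,1,\dots,1\bigr)$ and $Q_0=\id_{2n+1}$, both in $\syms{2n+1}{+}$ and commuting; then $P_1=\tfrac12(P_0+Q_0)=\operatorname{diag}\bigl(\tfrac32,\tfrac34,1,\dots,1\bigr)$ has determinant $\tfrac98$. Only the product of the two determinants is conserved, not the individual ones (the limit $R_0$ does have determinant one, but the intermediate iterates in general do not). The paper's own argument takes a different route from yours — it reduces to the case $Q_0=\id$, where all iterates are rational functions of $P_0$ and hence simultaneously diagonalizable, and handles general $Q_0$ by conjugating with $Q_0^{\pm1/2}$ — but it relies at the decisive moment on the assertion that the arithmetic and harmonic means of two commuting determinant-one positive definite matrices again have determinant one, which the same diagonal example contradicts. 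So your identification of the ``delicate point'' is exactly right; what is missing is not bookkeeping but an additional hypothesis (or a renormalization of the iterates), since the step genuinely fails for arbitrary $P_0,Q_0\in\syms{2n+1}{+}$.
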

\begin{proof}
 By the constructions in \eqref{eq:AHMalg} and \eqref{eq:AHMalg2}, 
 it is clear that $Q_n$ and $P_n$ take values in $\sym{2n+1}{+}$.
 We show that $Q_n$ and $P_n$ have determinant $1$.

 For the case of $Q_0 = \id$: By Lemma 7 in \cite{Nak:AHM}, $Q_n P_n = P_n Q_n$ for 
 $n=0, 1,2, \dots$ and therefore $Q_n$ and $P_n$ can be simultaneously diagonalized 
 and if they have determinant $1$ then $Q_{n+1}$ and $P_{n+1}$ also have 
 determinant $1$.

 For the general case of $Q_0$: 
 By (30) in   \cite{Nak:AHM}, the the sequences of matrices 
 $\{P_n'\}_{n=0, 1, 2, \dots}$ and $\{Q_n'\}_{n=0, 1, 2, \dots}$ are introduced by 
\[
 P_n'=Q_0^{-1/2} P_n Q_0^{1/2}, \quad
 Q_n'=Q_0^{-1/2} Q_n Q_0^{1/2}, \quad n =0, 1, 2, \dots,
\]
 and clearly $Q_0'=\id$. Thus $Q_{n}'$ and $P_{n}'$ are determinant $1$ as before,
 and thus $Q_{n}$ and $P_{n}$ are determinant $1$ as well.
\end{proof}
 The algorithm defined in \eqref{eq:AHMalg} and \eqref{eq:AHMalg2} together with Lemma \ref{lem:sym} will give an algorithm for the 
 mid point of geodesic segment in $\mathcal N$:
  \begin{proof}[The proof of Theorem \ref{thm:main2}]
  Let $p, q \in \mathcal N$ be distinct points and the geodesic segment $\gamma$ connecting $p$ and $q$.
  Let $P_0$ and $Q_0$ be distinct points and the horizontal geodesic $G(t)$ connecting $P_0$ and 
   $Q_0$ in $\mathcal M$ such that $\pi(P_0) =p, \pi(Q_0) =q$ and $\pi (G(t)) = \gamma$.  Since $\mathcal M \subset \mathcal L = 
 \syms{2n+1}{+}$ is totally geodesic, the $P_0, Q_0$ and $G$ can be points and a geodesic in 
 $\syms{2n+1}{+}$. Now applying Theorem \ref{thm:Nakamura}, one obtains 
 sequences of matrices $\{P_n\}_{n=0, 1, 2, \dots}$ and $\{Q_n\}_{n=0, 1, 2, \dots}$
 such that they converge to the midpoint 
\[
 R_0=  P_0^{1/2}(P_0^{-1/2} Q_0 P_0^{-1/2})^{1/2} P_0^{1/2}
\]
 of $G(t)$ in a quadratic order. Moreover by Lemma \ref{lem:sym},
 $\{P_n\}_{n=0, 1, 2, \dots}$ and $\{Q_n\}_{n=0, 1, 2, \dots}$ take values in $\syms{2n+1}{+}$.
 Then the Riemannian submersion 
 $\pi: \mathcal M \to \mathcal N$ gives a corresponding
 sequences of points $\{p_n=\pi(P_n) \in \mathcal N\}_{n=0, 1, 2, \dots}$ and $\{q_n=\pi(Q_n) \in \mathcal N\}_{n=0, 1, 2, \dots}$ such that they 
 converge to the midpoint  $r = \pi(R_0)$ of the geodesic segment $\gamma\subset \mathcal N$ connecting $p, q \in \mathcal N$.
  \end{proof}
 
\section{Toda lattice type Lax pair}\label{sc:Lax}
 Let us rephrase the block Cholesky decomposition of $G \in  \SO{n+1,}{n} 
  \subset \syms{2n+1}{+}$  in Lemma \ref{lem:geo} as follows:
 \begin{equation}\label{eq:Cholesky2}
 G  = G_1 G_2, \quad G_1 =M D, \quad G_2= M^T, 
 \end{equation}
 where $M$ is the block lower triangular matrix with trivial diagonal
 and $D$ is the block diagonal matrix.
 More explicitly,
 \begin{equation*}
 M=
 \begin{pmatrix}
  \id_n & 0  &0 \\ m_{21}&  1 & 0 \\
   m_{31} &  - m_{21}^T& \id_n \\
 \end{pmatrix}, \quad 
  D=
 \begin{pmatrix}
 d_{11} & 0  & 0 \\
 0 & 1 & 0  \\
 0 & 0 & d_{11}^{-1}
 \end{pmatrix},
  \end{equation*}
 where $m_{21}$ is a row vector with length $n$, 
 $d_{11}$ is a order $n$ positive definite symmetric matrix and
  $m_{31}$ is a $n \times n$ matrix.
 Note that from the symmetry $JG^{-1}J = G$, $m_{31}$ and $m_{21}$ satisfy a symmetry relation
 \[
 m_{31} + m_{31}^T = - m_{21} m_{21}^T.
 \]
  Let us denote the projection to the first factor of the above decomposition 
 by $\pi_1$. Accordingly, the Lie algebra $\mathfrak g= 
 \so{n+1,}{n}$ of $\SO{n+1,}{n}$ can be 
 decomposed as
 \[
 \mathfrak g = \mathfrak g_1 \oplus \mathfrak g_2,
 \]
  that is,
 \begin{equation*}
  \pi_1 X = 
  \begin{pmatrix}
  -Q & 0 & 0 \\
  t^T & 0 & 0 \\
  S & t & Q^T 
    \end{pmatrix}
    \quad
    \mbox{for}
    \quad
    X = 
    \begin{pmatrix}
  -Q & r & R \\
  t^T & 0 & -r^T \\
  S & -t & Q^T 
    \end{pmatrix} 
    \in \mathfrak g= \so{n+1,}{n}.
 \end{equation*}
 Let $V$ be the matrix in \eqref{eq:GtiniV} and the horizontal geodesic 
$G(t) = \exp (t V)$, and consider the block Cholesky decomposition 
$G(t) = G_1(t) G_2(t)$  by \eqref{eq:Cholesky2}. Note that since $G(t)$ takes 
values in $\syms{2n+1}{+}$, the decomposition is defined for all $t \in \R$.
Then consider an adjoint orbit 
\begin{equation*}
 L(t)=\ad (G_1(t)^{-1})V: \R \to \mathfrak g = \so{n+1,}{n}.
\end{equation*}
 We are now ready to prove the main result.
  \begin{proof}[Proof of \textrm{Theorem \ref{thm:main3}}]
 We first show that $L(t)$ has the form in \eqref{eq:Lax}.
 Since $G_1(t)$ is a block lower triangular matrix, it is easy to see 
 that the upper right part, that is, the sub-matrix given by the first 
 $n$-rows and the last $n$-columns, is zero. On the other hand, $L(t)$
  can be rephrased as
\begin{equation*}
 L(t)=\ad (G_1(t)^{-1})V= \ad (G_2(t) G(t)^{-1})V= \ad (G_2(t))V,
\end{equation*}
 since $G(t)= \exp (t V)$ commutes with $V$.
 Obviously, $G_2(t)$ is the block upper triangular and has a 
 trivial block diagonal part, and therefore the lower 
 left part of $L$, that is, the sub-matrix given by the last 
 $n$-rows and the first $n$-columns, is zero.
 Moreover a straightforward computation shows that the 
 $(n+1, n+1)$ entry is zero and the sub-matrices 
 given by $n+1$-th row  and $n+1$-th column are
 constants $a_0^T$ and $-a_0$, respectively.
 
 Finally explicit systems of ODEs in \eqref{eq:Laxexplicit} and \eqref{eq:Laxexplicit2} are given by straightforward computations.
\end{proof}
The author states that there is no conflict of interest. Data sharing is not applicable to this article as no new data were created or analyzed in this study.

\subsection*{Acknowledgements}
We would like to express our gratitude for the anonymous referee's comments on the manuscript, particularly for their valuable input in the substantial reformulation of Theorem B.

\bibliographystyle{plain}
\def\cprime{$'$}

\end{document}